\documentclass[11pt,twoside]{amsart}
\usepackage[latin1]{inputenc}
\usepackage[OT1]{fontenc}
\usepackage{amsmath}
\usepackage{amsthm}
\usepackage{amssymb}
\usepackage[all]{xy}
\usepackage{bbm}
\usepackage{amscd}
\usepackage{a4wide}
\usepackage{enumitem}
\usepackage{mathtools}

\setenumerate[0]{label=(\roman*)}

\DeclareMathOperator{\cc}{\mathsf{c}}

\DeclareMathOperator{\pr}{\mathsf{pr}}

\DeclareMathOperator{\Ho}{\mathsf H}

\DeclareMathOperator{\rank}{\mathsf{rank}}
\DeclareMathOperator{\ch}{\mathsf{ch}}

\DeclareMathOperator{\CH}{\mathsf{CH}}

\let\det\relax
\DeclareMathOperator{\det}{\mathsf{det}}

\newcommand{\wH}{\widetilde H}

\makeatletter
\newcommand{\leqnomode}{\tagsleft@true}
\newcommand{\reqnomode}{\tagsleft@false}
\makeatother

\newcommand{\sym}{\mathfrak S}

\newcommand{\cM}{\mathcal M}

\newcommand{\reg}{\mathcal O}

\renewcommand{\theta}{\vartheta}
\renewcommand{\rho}{\varrho}
\renewcommand{\phi}{\varphi}

\usepackage[usenames,dvipsnames]{xcolor}
\usepackage{hyperref}
\hypersetup{
    colorlinks,
    linkcolor={red!50!black},
    citecolor={blue!50!black},
    urlcolor={blue!80!black}
}
\usepackage{aliascnt} 

\newtheorem{theorem}{Theorem}[section]

  \newaliascnt{proposition}{theorem}
  
  \aliascntresetthe{proposition}

  \newaliascnt{lemma}{theorem}
  \newtheorem{lemma}[lemma]{Lemma}
  \aliascntresetthe{lemma}

  \newaliascnt{corollary}{theorem}
  \newtheorem{cor}[corollary]{Corollary}
  \aliascntresetthe{corollary}

\theoremstyle{definition}


  \newaliascnt{definition}{theorem}
  
  \aliascntresetthe{definition}

  \newaliascnt{remark}{theorem}
  
  \aliascntresetthe{remark}

  \newaliascnt{condition}{theorem}
  
  \aliascntresetthe{condition}

  \newaliascnt{question}{theorem}
  
  \aliascntresetthe{question}

  \newaliascnt{example}{theorem}
  
  \aliascntresetthe{example}




\begin{document}

\title{Discriminant of Tautological Bundles on Symmetric Products of Curves}
\author[A.\ Krug]{Andreas Krug}

\begin{abstract}
We compute a formula for the discriminant of tautological bundles on symmetric powers of a complex smooth projective curve. 
It follows that the Bogomolov inequality does not give a new restriction to stability of these tautological bundles. It only rules out tautological bundles which are already known to have the structure sheaf as a destabilising subbundle.
\end{abstract}

\maketitle

\section{Introduction}

There is a natural way to produce a vector bundle on the symmetric product of a curve out of a vector bundle on the curve -- use the Fourier--Mukai transform along the universal effective divisor. 

More precisely, let $C$ be a smooth curve over an algebraically closed field of characteristic zero, and let $n\ge 2$. Then the symmetric product $C^{(n)}=C^n/\sym_n$ can be identified with the moduli space of effective degree $n$ divisors on $C$, with the universal family $\Xi\subset C\times C^{(n)}$ given by the image of the embedding 
\[C\times C^{(n-1)}\hookrightarrow C\times C^{(n)}\quad,\quad (x,x_1+\dots +x_{n-1})\mapsto (x,x_1+\dots+x_{n-1}+x)\,.\]
Let $p\colon \Xi\to C$ and $q\colon \Xi\to C^{(n)}$ be the projections. For a vector bundle $E$ on $C$, the associated \textit{tautological bundle} on $C^{(n)}$ is $E^{[n]}:=q_*p^*E$. Since $q$ is flat and finite of degree $n$, this is indeed a vector bundle with $\rank(E^{[n]})=n\rank(E)$ and fibres $E^{[n]}(x_1+\dots+x_n)=\Ho^0(E_{\mid x_1+\dots+x_n})$. 

One much-studied question regarding tautological bundles is whether they are stable. Here, by \emph{stability} we mean slope stability with respect to the polarisation $H=x+C^{(n-1)}\subset C^{(n)}$.
Strengthening earlier results of \cite{AnconaOttaviani--stab, BohnhorstSpindler--stab, Mistretta--stabtaut, EMLN-stab, BiswasNagaraj-stab, DanPal-stab, BasuDan-stab}, it was proven in \cite{Krug--stab} that if $E$ is (semi-)stable with slope $\mu(E)\notin[-1,n-1]$ (or, for semi-stability, $\mu(E)\notin(-1,n-1)$), then also $E^{[n]}$ is (semi-)stable.

For $\mu(E)\in [-1,n-1]$, the situation seems more complicated.
It was observed in \cite[Sect.\ 3]{Krug--stab} that the above condition for stability is already numerically optimal: For every curve $C$ and every $d=0,\dots, n-2$ there is a line bundle of degree $d$ on $C$ such that $L^{[n]}$ is unstable. There are also line bundles of degree $-1$ and $n-1$ such that $L^{[n]}$ is strictly semi-stable. On the other hand, there are also examples of line bundles with $d\in(-1,n-1)$ such that $L^{[n]}$ is stable. Namely, by \cite{BiswasNagaraj-stab}, $L^{[2]}$ is stable for every non-trivial line bundle $L$ of degree zero. We would like to find a complete answer to the following question:

\textit{Let $E$ be a (semi-)stable bundle of slope $\mu(E)\in [-1,n-1]$. Under which circumstances is $E^{[n]}$ a (semi-)stable bundle on $C^{(n)}$?}

An obvious first approach to get a meaningful restriction on semi-stability of $E^{[n]}$ is to use the Bogomolov inequality
\[
\int_{C^{(n)}} \Delta(E^{[n]})H^{n-2}\ge 0  
\]
where $\Delta(E^{[n]})=-2\rank(E^{[n]})\ch_2(E^{[n]})+\cc_1(E^{[n]})^2$ is the discriminant; see \cite{Bogomolov--ineq}, \cite{Miyaoka--stabineq}, \cite[Thm.\ 7.3.1]{HL}. In this paper, we compute the relevant intersection number. 

\begin{theorem}\label{thm:main}
Let $C$ be a smooth projective curve of genus $g$, and let $E$ be a vector bundle on $C$ of rank $r$ and degree $d$. Then, for every $n\ge 2$, 
\[
 \int_{C^{(n)}} \Delta(E^{[n]})H^{n-2}=d^2-(n-2)dr+(n-1)(g-1)r^2\,.
\]
\end{theorem}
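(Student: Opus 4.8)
The plan is to compute $\ch(E^{[n]})\in H^{*}(C^{(n)},\Q)$ in closed form, extract the degree-four class $\Delta(E^{[n]})$, and integrate it against $H^{n-2}$ by means of the classical intersection theory of the symmetric product of a curve.

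For the first part, note that since $q$ is finite one has $E^{[n]}=Rq_{*}p^{*}E$. Writing $\iota\colon\Xi\hookrightarrow C\times C^{(n)}$ for the inclusion and $\pr_{1},\pr_{2}$ for the two projections of $C\times C^{(n)}$, I would apply Grothendieck--Riemann--Roch for $\pr_{2}$ to the sheaf $\iota_{*}p^{*}E$, whose Chern character is read off from the Koszul resolution $0\to\pr_{1}^{*}E(-\Xi)\to\pr_{1}^{*}E\to\iota_{*}p^{*}E\to 0$; after the usual cancellation of the factors $\td(T_{C^{(n)}})$ this yields
\[
 \ch(E^{[n]})=\pr_{2*}\!\Big(\pr_{1}^{*}\!\big(\ch(E)\,\td(T_{C})\big)\cdot\big(1-e^{-\xi}\big)\Big),\qquad\xi:=[\Xi]\in H^{2}(C\times C^{(n)}).
\]
Here $\ch(E)\,\td(T_{C})=r+\big(d-r(g-1)\big)\eta$, with $\eta\in H^{2}(C)$ the point class. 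Next I would invoke the standard description (Macdonald) of the class of the universal divisor, $\xi=n(\eta\boxtimes 1)+\gamma+(1\boxtimes x)$, where $x\in H^{2}(C^{(n)})$ is the class of the polarisation $H$ and $\gamma\in H^{1}(C)\otimes H^{1}(C^{(n)})$, together with the relations $(\eta\boxtimes 1)^{2}=0$, $(\eta\boxtimes 1)\gamma=0$, $\gamma^{3}=0$ and the normalisation $\pr_{2*}(\gamma^{2})=-2\theta$ defining the theta class $\theta\in H^{2}(C^{(n)})$ (the pullback of the theta divisor along the Abel--Jacobi map). A short computation then gives $\pr_{2*}(e^{t\xi})=(tn-t^{2}\theta)e^{tx}$ and $\pr_{2*}\!\big((\eta\boxtimes 1)e^{t\xi}\big)=e^{tx}$, which collapses the displayed formula to
\[
 \ch(E^{[n]})=r(n+\theta)e^{-x}+\big(d-r(g-1)\big)\big(1-e^{-x}\big).
\]
From this one reads off $\rank(E^{[n]})=nr$, $\cc_{1}(E^{[n]})=\big(d-r(g-1)-nr\big)x+r\theta$ and $\ch_{2}(E^{[n]})=\tfrac12\big(nr-d+r(g-1)\big)x^{2}-r\,x\theta$, hence, with $d':=d-r(g-1)$,
\[
 \Delta(E^{[n]})=-2nr\,\ch_{2}(E^{[n]})+\cc_{1}(E^{[n]})^{2}=d'(d'-nr)\,x^{2}+2rd'\,x\theta+r^{2}\theta^{2}.
\]

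For the second part, since $H=x$, integrating $\Delta(E^{[n]})H^{n-2}$ over $C^{(n)}$ reduces to the three intersection numbers $\int_{C^{(n)}}x^{n-b}\theta^{b}$ for $b=0,1,2$. These are given by Macdonald's formula $\int_{C^{(n)}}x^{n-b}\theta^{b}=g(g-1)\cdots(g-b+1)$, a falling factorial which vanishes once $b>g$, in accordance with $\theta^{b}=0$ there since $\theta$ is pulled back from the $g$-dimensional Jacobian; this gives the values $1$, $g$, $g(g-1)$. Substituting these into the expression above and expanding $d'=d-r(g-1)$, the terms in $drg$ and in $r^{2}g^{2}$ cancel and the expression simplifies to $d^{2}-(n-2)dr+(n-1)(g-1)r^{2}$, as claimed.

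The step I expect to be most delicate is the Chern character computation, and specifically getting right the two normalisations it rests on: that the $H^{2}(C)\otimes H^{0}(C^{(n)})$-coefficient of $\xi=[\Xi]$ is exactly $n$ (the fibre of $q$ over a divisor $D$ being the length-$n$ subscheme $D$, so that $[\Xi]$ restricts to $D$ on each slice $C\times\{D\}$), that the remaining summand $1\boxtimes x$ has $x=H$, and that $\pr_{2*}(\gamma^{2})=-2\theta$ for the theta class entering Macdonald's formula --- every constant in the final answer is sensitive to these. One should also check that the degenerate genera $g=0,1$, where $\theta$, respectively $\theta^{2}$, vanishes, fit the uniform answer. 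As a consistency check, for $C=\P^{1}$, $n=2$ and $E=\ko_{\P^{1}}$ one has $\ko_{\P^{1}}^{[2]}\cong\ko_{\P^{2}}\oplus\ko_{\P^{2}}(-1)$, so $\Delta=-H^{2}$ and $\int_{\P^{2}}\Delta=-1$, which agrees with $d^{2}-(n-2)dr+(n-1)(g-1)r^{2}=0-0+1\cdot(-1)\cdot 1$.
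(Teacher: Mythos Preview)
Your argument is correct, but it follows a genuinely different route from the paper's. The paper pulls everything back along the quotient $\pi\colon C^{n}\to C^{(n)}$ and proceeds by induction on $n$ using the short exact sequence
\[
0\to \pr_1^{*}E(-\delta'_n)\to \pi_n^{*}E^{[n]}\to \overline\pr_1^{*}\pi_{n-1}^{*}E^{[n-1]}\to 0
\]
to compute $\cc_1(\pi^{*}E^{[n]})^{2}\wH^{n-2}$ and $\ch_2(\pi^{*}E^{[n]})\wH^{n-2}$ separately via explicit intersection numbers of pairwise diagonals on $C^{n}$; it never introduces the theta class or GRR. You instead stay on $C^{(n)}$, apply Grothendieck--Riemann--Roch along $\pr_2\colon C\times C^{(n)}\to C^{(n)}$, and use Macdonald's description of $[\Xi]$ and the identities $\int x^{n-b}\theta^{b}=g(g-1)\cdots(g-b+1)$ to evaluate the result. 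In fact the paper explicitly mentions your route as a viable alternative (``generalise the formula of Mattuck to higher rank bundles, and then use it to compute $\Delta(E^{[n]})H^{n-2}$'') before opting for the inductive computation on $C^{n}$. Your approach has the advantage of yielding the closed formula $\ch(E^{[n]})=r(n+\theta)e^{-x}+(d-r(g-1))(1-e^{-x})$, which is of independent interest and makes the final evaluation a short substitution; the paper's approach is more self-contained, needing only elementary diagonal intersections rather than the $(x,\theta)$ calculus and the precise normalisations of $[\Xi]$ and $\gamma^{2}$ that you rightly flag as the delicate points.
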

Dividing the equation of \autoref{thm:main} by $r^2$, we see that 
\[
\int \Delta(E^{[n]})H^{n-2}\ge 0 \quad\iff\quad \mu^2-(n-2)\mu+(n-1)(g-1)\ge 0\,. 
\]
Hence, the Bogomolov inequality gives

\begin{cor}\label{cor:unstable}
The tautological bundle $E^{[n]}$ is unstable for every vector bundle $E$ on $C$ with
\[
 \mu(E)\in \left(\frac{n-2}2 - \frac12\sqrt{(n-2)^2-4(n-1)(g-1)}\,\,,\,\, \frac{n-2}2 + \frac12\sqrt{(n-2)^2-4(n-1)(g-1)}      \right)
\]
\end{cor}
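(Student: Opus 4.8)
The plan is to read \autoref{cor:unstable} off \autoref{thm:main} via the Bogomolov inequality, exactly in the spirit of the computation displayed just before the corollary. First I would recall the precise form of the inequality needed: if $X$ is a smooth projective variety of dimension $N$ and $H$ is an ample class on $X$, then every $H$-semistable torsion-free sheaf $F$ on $X$ satisfies $\int_X \Delta(F) H^{N-2} \ge 0$ (see \cite{Bogomolov--ineq}, \cite{Miyaoka--stabineq}, \cite[Thm.\ 7.3.1]{HL}). This is to be applied with $X = C^{(n)}$, which is smooth projective of dimension $n$, with the ample class $H = x + C^{(n-1)}$, and with $F = E^{[n]}$, which is locally free by the discussion in the introduction. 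Hence semistability of $E^{[n]}$ would force $\int_{C^{(n)}} \Delta(E^{[n]}) H^{n-2} \ge 0$.

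Next I would substitute the value of this intersection number supplied by \autoref{thm:main}. Since $r = \rank(E) \ge 1$ we may divide by $r^2$; writing $\mu = d/r = \mu(E)$, the inequality $\int_{C^{(n)}} \Delta(E^{[n]}) H^{n-2} \ge 0$ becomes $\mu^2 - (n-2)\mu + (n-1)(g-1) \ge 0$. The polynomial $P(t) = t^2 - (n-2)t + (n-1)(g-1)$ has positive leading coefficient, so when its discriminant $(n-2)^2 - 4(n-1)(g-1)$ is nonnegative it is strictly negative precisely on the open interval bounded by its two real roots $\tfrac{n-2}{2} \pm \tfrac12\sqrt{(n-2)^2 - 4(n-1)(g-1)}$; and when that radicand is negative the interval appearing in the statement is empty, so there is nothing to prove. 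Consequently, if $\mu(E)$ lies strictly between these two roots, then $P(\mu) < 0$, whence $\int_{C^{(n)}} \Delta(E^{[n]}) H^{n-2} < 0$, contradicting the Bogomolov inequality for a semistable sheaf; therefore $E^{[n]}$ is unstable.

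Essentially all of the content has already been placed into \autoref{thm:main}, so no real obstacle remains. The only points to be careful about are recording the correct normalisation of the discriminant (fixed in the introduction as $\Delta(F) = -2\rank(F)\ch_2(F) + \cc_1(F)^2$, matching \cite[Thm.\ 7.3.1]{HL}), the ampleness of $H$ on $C^{(n)}$, and the vacuous degenerate case just mentioned.
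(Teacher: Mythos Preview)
Your proposal is correct and follows essentially the same route as the paper: the paper's entire argument for the corollary is the sentence preceding it, namely dividing the formula of \autoref{thm:main} by $r^2$ and applying the Bogomolov inequality to read off when the quadratic $\mu^2-(n-2)\mu+(n-1)(g-1)$ is negative. You have simply spelled out this one-line deduction in full, including the harmless degenerate case of negative radicand.
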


For $g\ge 1$, the interval of \autoref{cor:unstable} lies inside the interval $(g-1,n-g)$, and it approaches for large $n$ asymptotically the whole interval $(g-1, n-g)$. 

However, the unstability criterion of \autoref{cor:unstable} is weaker than one already given in \cite[Sect.\ 3]{Krug--stab}. There, it was observed that $\reg_{C^{(n)}}$ is a destabilising subbundle of $E^{[n]}$ for every $E$ with $\mu(E)<n-1$ and $\Ho^0(E)\neq 0$. By the Riemann-Roch theorem, this implies that $E^{[n]}$ is always unstable when $\mu(E)\in (g-1,n-1)$.  

Let now $n=2$, such that $C^{(2)}$ is a surface. Then for $d\notin [-1,r]$, the bundle $E^{[2]}$ is stable by \cite{Krug--stab}, and we write $\cM=\cM_{C^{(2)}}\bigl(\ch(E^{[2]}), \det(E^{[2]})\bigr)$ for the moduli space of stable sheaves on $C^{(2)}$ with the same Chern character and determinant. \autoref{thm:main} tells us that $\int \Delta(E^{[n]})$ grows quadratically with $d$. Hence, we can apply the results of \cite{OG--basic} to obtain
\begin{cor}
 For $|d|\gg 0$, the moduli space $\cM$ is irreducible and generically reduced of the expected dimension.
\end{cor}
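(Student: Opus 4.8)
The plan is to combine \autoref{thm:main} with O'Grady's asymptotic results on moduli of sheaves on surfaces \cite{OG--basic}: the former supplies the growth of the discriminant, and the latter yields the conclusion once the discriminant is large enough.

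First I would extract the numerics. Setting $n=2$ in \autoref{thm:main} gives $\int_{C^{(2)}}\Delta(E^{[2]})=d^2+(g-1)r^2$, which tends to $+\infty$ as $|d|\to\infty$; moreover $\rank(E^{[2]})=2r\ge2$, the surface $C^{(2)}$ and the polarisation $H$ are fixed, and by \cite{Krug--stab} the bundle $E^{[2]}$ is $H$-stable for $d\notin[-1,r]$, so $\cM$ is non-empty as soon as $|d|\gg0$.

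The one subtlety is that $\cc_1(E^{[2]})=\bigl(d-(g+1)r\bigr)H+r\,\theta$ --- the $n=2$ instance of the first Chern class computed in the course of proving \autoref{thm:main}, with $\theta$ the pull-back of the theta divisor along $C^{(2)}\to\Pic^2(C)$ --- depends on $d$. This is removed by the standard twisting device: for a line bundle $L$ on $C^{(2)}$ the assignment $F\mapsto F\otimes L$ identifies $\cM$ with $\cM_{C^{(2)}}\bigl(\ch(E^{[2]}\otimes L),\det(E^{[2]}\otimes L)\bigr)$ compatibly with all the properties at issue, it leaves $\Delta$ unchanged, and it shifts $\cc_1$ by $2r\,\cc_1(L)$. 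Taking $L=\reg(kH)$ for a suitable integer $k$, one reduces to the case in which $\cc_1(E^{[2]})$ is one of the finitely many classes $\bar d\,H+r\,\theta$ with $0\le\bar d<2r$, one for each residue of $d$ modulo $2r$. Then \cite{OG--basic} applies to each of the finitely many data $(C^{(2)},H,2r,\bar d H+r\theta)$: there is a threshold $\Delta_0$ past which the moduli space of $H$-semistable sheaves with that Chern character --- and hence also the fixed-determinant space $\cM$, a fibre of the determinant morphism to $\Pic(C^{(2)})$ --- is irreducible, normal, generically reduced (generically a $\mu$-stable bundle, hence generically smooth) and of the expected dimension. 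Since $\int_{C^{(2)}}\Delta(E^{[2]})=d^2+(g-1)r^2\to\infty$, the finitely many thresholds are all exceeded for $|d|\gg0$; transporting back along the chosen twist gives the statement.

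There is no deep obstacle here: the substance is \autoref{thm:main}, and what remains is the careful interface with \cite{OG--basic} --- verifying that its hypotheses (fixed surface, fixed polarisation, fixed rank $\ge2$, fixed first Chern class) are met after the twist-reduction, and that the passage from fixed $\cc_1$ to fixed determinant preserves irreducibility, generic reducedness, and the dimension count in the asymptotic range. Both are routine, but should be stated precisely.
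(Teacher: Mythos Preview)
Your approach is exactly the paper's: the paper's entire argument is the sentence preceding the corollary, namely that $\int\Delta(E^{[2]})$ grows quadratically in $d$ by \autoref{thm:main}, so one may invoke \cite{OG--basic}. You go further than the paper by spelling out the twist-reduction to finitely many first Chern classes and the passage to fixed determinant; these are precisely the points the paper leaves implicit, and your treatment of them is sound (the exact expression you give for $\cc_1(E^{[2]})$ in terms of $H$ and $\theta$ is not derived in the paper---only $\cc_1(\pi^*E^{[2]})=d\wH-r\delta$ appears---but the argument only needs that the class varies with $d$ by integer multiples of $H$, which is clear).
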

Since $\chi(\reg_{C^{(2)}})=1-g+\binom g2=\frac12(g^2-3g+2)$, the expected dimension of $\cM$ is
\[
 \Delta(E^{[2]})-(\rank(E^{[2]})^2-1)\chi(\reg_X)=d^2+r^2(g-1)-\frac12(4r^2-1)(g^2-3g+2)\,.
\]

\section{Proof of \autoref{thm:main}}

\subsection{General Conventions}

All our computations are carried out in the Chow rings modulo numerical equivalence. In particular, we will often omit the integral symbol $\int$ when writing intersection numbers. 

Throughout $C$ will denote a smooth projective curve over a field of characteristic zero, and $x$ will denote a point of the curve $C$. It will not matter which point, since we are doing our computations modulo numerical equivalence. In particular, the notion of slope stability with respect to our polarisation $H_n=x+C^{(n-1)}$ of $C^{(n)}$ does not depend on the choice of $x\in C$.

\subsection{Some intersection numbers on $C^n$}

Let $\pi_n\colon C^n\to C^{(n)}$ be the $\sym_n$-quotient morphism. We have $\wH_n:=\pi_n^*H_n=\sum_{i=1}^n \pr_i^*([x])$ where $\pr_i\colon C^n\to C$ is the projection to the $i$-th factor. 
For $1\le i<j\le n$, we consider the \textit{pairwise diagonal} 
\[
\Delta_{ij}=\bigl\{(x_1,\dots, x_n)\in C^n\mid x_i=x_j\bigr\}\,.
\]
The \textit{big diagonal} is the sum of all pairwise diagonals
\[
 \delta_n=\sum_{1\le i< j\le n}\Delta_{ij}\,.
\]
Another divisor on $C^n$ that we will need is
$\delta_n'=\delta_n-\overline\pr_1^*\delta_n=\sum_{j=2}^n \Delta_{1j}$ where
\[
 \overline \pr_1\colon C^n\to C^{n-1}\quad,\quad (x_1,x_2,\dots,x_n)\mapsto (x_2,\dots,x_n)\,.  
\]
We will often omit the index $n$ in the notation and write $\pi$, $H$, $\wH$, $\delta$, $\delta'$ instead of  
$\pi_n$, $H_n$, $\wH_n$, $\delta_n$, $\delta'_n$.
For $I\subset \{1,\dots,n\}$, we define
\[
 \eta_I:=\prod_{i\in I} \pr_i^*([x])=\bigl[\{(x_1,\dots,x_n)\mid x_i=x\,\,\forall i\in I\}\bigr]\,.
\]
Since $\pr_i^*([x])^2=0$ for every $i=1,\dots, n$, we get 
\begin{equation}\label{eq:wHk}
 \wH^k=k!\sum_{|I|=k} \eta_I\quad\quad\text{for all $k=1,\dots, n$}\,.
\end{equation}

\begin{lemma}
We have the following intersection numbers on $C^n$:
\begin{align}
\wH^n=n!\quad,\quad \delta \wH^{n-1}=n!(n-1) \quad,\quad \delta^2\wH^{n-2}=-n!(g-1)+n!(n-2)n\,,\label{eq:intnumbers1}\\
\pr_1^*([x])\delta'\wH^{n-2}=(n-1)!\quad,\quad \delta'^2\wH^{n-2}=-(n-1)!2(g-1)+(n-1)!(n-2)3\,. \label{eq:intnumbers2}
\end{align}
\end{lemma}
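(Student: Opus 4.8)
The plan is to compute all five intersection numbers by pushing everything onto the elementary building blocks $\eta_I$ via the identity \eqref{eq:wHk}. The strategy throughout is: reduce a diagonal class intersected with a large power of $\wH$ to a point count, using that $\eta_I$ for $|I|=n$ is a single (reduced) point of $C^n$, that $\eta_I\cdot \pr_i^*([x])=0$ whenever $i\in I$, and that the only way a monomial in the $\pr_i^*([x])$ survives in the top Chow group of $C^n$ is if it hits all $n$ factors exactly once. The genus only enters through self-intersections of diagonals, so the two ``interesting'' numbers are $\delta^2\wH^{n-2}$ and $\delta'^2\wH^{n-2}$; the other three are pure combinatorics.

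First I would dispatch the easy ones. For $\wH^n$, \eqref{eq:wHk} with $k=n$ gives $\wH^n=n!\,\eta_{\{1,\dots,n\}}=n!$. For $\delta\wH^{n-1}$, write $\delta=\sum_{i<j}\Delta_{ij}$ and $\wH^{n-1}=(n-1)!\sum_{|I|=n-1}\eta_I$; the product $\Delta_{ij}\cdot\eta_I$ is nonzero exactly when $I=\{1,\dots,n\}\setminus\{k\}$ with $\{i,j\}\cap\{i',j'\}$ arranged so the diagonal supplies the missing coordinate, and one checks each nonzero term equals $1$, so a count of admissible pairs gives $n!(n-1)$. Similarly $\pr_1^*([x])\,\delta'\,\wH^{n-2}$: here $\delta'=\sum_{j=2}^n\Delta_{1j}$, and $\wH^{n-2}=(n-2)!\sum_{|I|=n-2}\eta_I$; the factor $\pr_1^*([x])$ already pins coordinate $1$, the diagonal $\Delta_{1j}$ then also pins coordinate $j$, and $\eta_I$ must supply the remaining $n-2$, forcing $I=\{1,\dots,n\}\setminus\{1,j\}$, giving one surviving term per $j$, hence $(n-1)\cdot(n-2)!=(n-1)!$.

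The main work is the two self-intersections. For $\delta^2\wH^{n-2}$, expand $\delta^2=\sum_{i<j}\sum_{k<l}\Delta_{ij}\Delta_{kl}$ and split the double sum according to how $\{i,j\}$ and $\{k,l\}$ overlap: disjoint, sharing exactly one index, or equal. The disjoint and single-overlap cases reduce (after multiplying by $\wH^{n-2}$) to point counts of the same flavour as above and contribute the $n!(n-2)n$ term. The equal case produces $\sum_{i<j}\Delta_{ij}^2\wH^{n-2}$, and here I would use the self-intersection formula for the diagonal: restricting $\Delta_{ij}$ to itself and integrating against $\wH^{n-2}$ reduces, via the isomorphism $\Delta_{ij}\cong C^{n-1}$ and adjunction, to an intersection on $C^{n-1}$ involving the canonical class of $C$, which is where $-(g-1)$ enters (one gets $\Delta_{ij}^2\wH^{n-2}=-n!(g-1)/\binom n2\cdot\binom n2$-type bookkeeping, i.e.\ each of the $\binom n2$ terms contributes a fixed value proportional to $2g-2$). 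Assembling the three cases yields $\delta^2\wH^{n-2}=-n!(g-1)+n!(n-2)n$. The computation of $\delta'^2\wH^{n-2}$ is the same method with $\delta'=\sum_{j=2}^n\Delta_{1j}$: all pairs share the index $1$, so there is no ``disjoint'' case, the single-overlap case (distinct $j\ne k$, both paired with $1$) contributes the $(n-1)!(n-2)3$ term after a point count, and the $n-1$ diagonal self-intersections contribute $-(n-1)!\,2(g-1)$.

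I expect the genuine obstacle to be the self-intersection terms: getting the normal-bundle / adjunction computation right, i.e.\ correctly identifying $\Delta_{ij}^2$ as pulled back from the class of the diagonal in $C\times C$ (whose self-intersection is $2-2g$) times the appropriate $\eta$-factors, and then tracking the combinatorial multiplicities so that the final coefficients of $(g-1)$ come out exactly as $n!$ and $(n-1)!\cdot 2$. The off-diagonal terms are routine once the point-counting bookkeeping is set up, but they are numerous enough that keeping the overlap cases straight — especially making sure each surviving $\eta_{\{1,\dots,n\}}$ is counted with the right multiplicity coming from the factorials in \eqref{eq:wHk} — is the part most prone to error.
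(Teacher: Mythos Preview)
Your proposal is correct and follows essentially the same approach as the paper: expand powers of $\wH$ via \eqref{eq:wHk}, reduce intersections with diagonals to point counts according to how the index sets overlap, and handle the diagonal self-intersections via the pullback of $\Delta_C^2=(2-2g)[\pt]$ from $C\times C$ (which the paper states directly as $\Delta_{1j}^2=-2(g-1)\eta_{\{1,j\}}$). The paper's proof is exactly the case split you describe, so once you carry out the bookkeeping carefully the argument goes through without surprises.
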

\begin{proof}
The first equation is just a special case of \eqref{eq:wHk}. 
For the second equation, note that, for $|I|=n-1$, we have
\[
\Delta_{ij}\eta_I=\begin{cases} 
                                                                        1\quad& \text{if $|I\cap \{i,j\}|=1$}\,,\\
                                                                        0\quad& \text{if $|I\cap \{i,j\}|=2$.}                                                                    
                                                                       \end{cases}
 \]
Hence, by \eqref{eq:wHk}, we get $\Delta_{ij}\wH^{n-1}=(n-1)!2$. Since there are $\binom n2$ pairwise diagonals, we get $\delta\wH^{n-1}=\binom n2(n-1)!2=n!(n-1)$.

We have $\pr_1^*([x])\Delta_{1j}=\eta_{\{1,j\}}$. Hence, the only summand of $\wH^{n-2}$ which has a non-zero intersection pairing with $\pr_1^*([x])\Delta_{1j}$ is $\eta_{\{2,\dots,n\}\setminus\{j\}}$.  Hence $\pr_1^*([x])\Delta_{1j}\wH^{n-2}=(n-2)!$, again by \eqref{eq:wHk}. The formula 
$\pr_1^*([x])\delta'\wH^{n-2}=(n-2)!(n-1)=(n-1)!$ follows as $\delta'$ consists of $(n-1)$ pairwise diagonals.
For the computation of $\delta'^2\wH^{n-2}$, we first note that
\begin{equation}\label{eq:d1}
\delta'^2=\sum_{j=2}^n \Delta_{1j}^2 +2\sum_{2\le i<j\le n}\Delta_{1i}\Delta_{1j}\,.
\end{equation}
The self-intersection of a pairwise diagonal is $\Delta_{1j}^2=-2(g-1) \eta_{\{1,j\}}$. Hence,
\begin{equation}\label{eq:d2}
\Delta_{1j}^2\wH^{n-2}=-(n-2)!2(g-1)\,.
\end{equation}
Furthermore, $\Delta_{1i}\Delta_{1j}=\Delta_{1ij}=\bigl\{(x_1,\dots,x_n)\mid x_1=x_i=x_j\bigr\}$.   
For $|I|=n-2$, we have
\[
\Delta_{1ij}\eta_I=\begin{cases} 
                                                                        1\quad& \text{if $|I\cap \{1,i,j\}|=1$}\,,\\
                                                                        0\quad& \text{if $|I\cap \{1,i,j\}|\ge2$.}                                                                    
                                                                       \end{cases}
 \]
Since there are three $I\subset \{1,\dots, n\}$ with $|I|=n-2$ and $|I\cap \{1,i,j\}|=1$, equation \eqref{eq:wHk} yields 
\begin{equation}\label{eq:d3}
 \Delta_{1ij}\wH^{n-2}=(n-2)!3\,.
\end{equation}
We obtain the formula for $\delta'^2\wH^{n-2}$ by combining \eqref{eq:d1}, \eqref{eq:d2}, and \eqref{eq:d3}. 

The computation of $\delta^2\wH^{n-2}$ is very similar using that 
\[
 \delta^2=\sum_{1\le i<j\le n}\Delta_{ij}^2+6\sum_{1\le i<j<k\le n}\Delta_{ijk}+\sum_{\{i,j\}\cap\{k,\ell\}=\emptyset}\Delta_{ij}\Delta_{k,\ell}
\]
instead of \eqref{eq:d1}. The only additional ingredient is that, for $\{i,j\}\cap \{k,\ell\}=\emptyset$ and $|I|=n-2$, we have
\[
\Delta_{ij}\Delta_{k\ell}\eta_I=\begin{cases} 
                                                                        1\quad& \text{if $|I\cap \{i,j\}|=1=|I\cap \{k,\ell\}|$}\,,\\
                                                                        0\quad& \text{else.}                                                                    
                                                                       \end{cases}\qedhere
 \]
\end{proof}

\begin{lemma}\label{lem:projformula}
 Let $\alpha\in \CH^2(C^{n-2})$. Then 
 \[
\int_{C^n} \wH_n^{n-2} \overline \pr_1^*\alpha=(n-2) \int_{C^{n-1}} \wH_{n-1}^{n-3} \alpha\,.   
 \]
\end{lemma}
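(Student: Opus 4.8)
The plan is to reduce everything to the single identity
\[
 \overline{\pr}_{1*}\bigl(\wH_n^{\,n-2}\bigr)=(n-2)\,\wH_{n-1}^{\,n-3}\qquad\text{in }\CH^{n-3}(C^{n-1}),
\]
which is legitimate modulo numerical equivalence since proper pushforward descends to it; once this is established, the assertion is just the projection formula for $\overline{\pr}_1\colon C^n\to C^{n-1}$. To prove the identity I would first record the decomposition $\wH_n=\pr_1^*([x])+\overline{\pr}_1^*\wH_{n-1}$, which is immediate from $\wH_n=\sum_{i=1}^n\pr_i^*([x])$ together with the fact that $\pr_i$ factors through $\overline{\pr}_1$ for $i\ge 2$. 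Since $\pr_1^*([x])^2=\pr_1^*([x]^2)=0$ because $\dim C=1$, the binomial expansion of $\wH_n^{\,n-2}$ collapses to two terms,
\[
 \wH_n^{\,n-2}=\overline{\pr}_1^*\bigl(\wH_{n-1}^{\,n-2}\bigr)+(n-2)\,\pr_1^*([x])\cdot\overline{\pr}_1^*\bigl(\wH_{n-1}^{\,n-3}\bigr).
\]

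Next I would push this forward along $\overline{\pr}_1$, which is proper. The first term dies: by the projection formula $\overline{\pr}_{1*}\overline{\pr}_1^*(\wH_{n-1}^{\,n-2})=\overline{\pr}_{1*}(1)\cdot\wH_{n-1}^{\,n-2}$, and $\overline{\pr}_{1*}(1)=\overline{\pr}_{1*}[C^n]\in\CH_n(C^{n-1})=0$ for dimension reasons. For the second term, the projection formula gives $\overline{\pr}_{1*}\bigl(\pr_1^*([x])\cdot\overline{\pr}_1^*(\wH_{n-1}^{\,n-3})\bigr)=\overline{\pr}_{1*}(\pr_1^*([x]))\cdot\wH_{n-1}^{\,n-3}$, and $\overline{\pr}_{1*}(\pr_1^*([x]))=1$ because $\pr_1^*([x])$ is represented by $\{x\}\times C^{n-1}$, on which $\overline{\pr}_1$ restricts to an isomorphism onto $C^{n-1}$. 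Adding the two contributions yields the displayed identity.

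It then only remains to apply the projection formula once more:
\[
 \int_{C^n}\wH_n^{\,n-2}\,\overline{\pr}_1^*\alpha=\int_{C^{n-1}}\overline{\pr}_{1*}\bigl(\wH_n^{\,n-2}\bigr)\cdot\alpha=(n-2)\int_{C^{n-1}}\wH_{n-1}^{\,n-3}\,\alpha,
\]
where throughout I suppress the pullbacks identifying $\alpha$ with a class on $C^n$, resp.\ on $C^{n-1}$. I do not anticipate a genuine obstacle here; the only point that needs care is the bookkeeping of the various pullback and pushforward maps, in particular the check that $\overline{\pr}_{1*}(\pr_1^*([x]))=1$ rather than a larger multiple. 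As an alternative route that avoids pushforwards entirely, one can expand $\wH_n^{\,n-2}=(n-2)!\sum_{|I|=n-2}\eta_I$ using \eqref{eq:wHk}, split the sum according to whether $1\in I$, observe that the summands with $1\notin I$ integrate to zero against $\overline{\pr}_1^*\alpha$ for dimension reasons, and recognise the remaining summands, after integrating out the first factor, as $(n-2)!\sum_{J\subset\{2,\dots,n\},\,|J|=n-3}\eta_J=(n-2)\,\wH_{n-1}^{\,n-3}$.
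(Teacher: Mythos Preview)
Your proof is correct and follows essentially the same strategy as the paper: both reduce the assertion, via the projection formula for $\overline\pr_1$, to the identity $\overline\pr_{1*}\bigl(\wH_n^{\,n-2}\bigr)=(n-2)\,\wH_{n-1}^{\,n-3}$. The only difference is in how this identity is verified: the paper expands $\wH_n^{\,n-2}=(n-2)!\sum_{|I|=n-2}\eta_I$ via \eqref{eq:wHk} and computes $\overline\pr_{1*}\eta_I$ case by case (which is exactly your alternative route), whereas your main argument uses the decomposition $\wH_n=\pr_1^*([x])+\overline\pr_1^*\wH_{n-1}$ and the resulting two-term binomial expansion---a slightly slicker bookkeeping that avoids the $\eta_I$ altogether.
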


\begin{proof}
 By projection formula, 
 \[
  \int_{C^n} \wH_n^{n-2} \overline \pr_1^*\alpha= \int_{C^{n-1}} \overline\pr_{1*}(\wH_n^{n-2} \overline \pr_1^*\alpha) = \int_{C^{n-1}} (\overline\pr_{1*}\wH_n^{n-2}) \alpha\,.
 \]
We have $\overline\pr_{1*}\wH_n^{n-2}=(n-2)\wH_{n-1}^{n-3}$ as follows form \eqref{eq:wHk} together with 
\[
\overline \pr_{1*}\eta_I=\begin{cases} 
                                                                        \eta_{I\setminus\{1\}}\quad& \text{if $1\in I$,}\\
                                                                        0\quad& \text{if $1\notin I$.}                                                                    
                                                                       \end{cases}\qedhere
 \]
\end{proof}

\subsection{Computation of the discriminant by induction}

The Chern characters of tautological bundles are computed in \cite[Sect.\ 3]{Mattuck--sym} in the case that $E=L$ is a line bundle. So one way to proceed would be to generalise the formula \cite[Prop.\ 6]{Mattuck--sym} to higher rank bundles, and then use it to compute $\Delta(E^{[n]})H^{n-2}$. Instead, we chose a more direct route to the computation of $\Delta(E^{[n]})H^{n-2}$, using the short exact sequence 
\begin{equation}\label{eq:keyses}
0\to \pr_1^*E(-\delta'_n)\to \pi_n^* E^{[n]}\to \overline\pr_1^*\pi_{n-1}^*E^{[n-1]}\to 0\,. 
\end{equation}
of \cite[Prop.\ 1.4]{Krug--stab} to proceed by induction on $n$.

We consider a vector bundle $E$ of rank $r$ and degree $d$ on the curve $C$ of genus $g$.
\begin{lemma}\label{lem:c_1^2} For all $n\ge 2$, we have
 \[
 \cc_1(E^{[n]})^2H^{n-2}=d^2-2dr(n-1)-r^2(g-1)+r^2n(n-2)\,.
 \]
\end{lemma}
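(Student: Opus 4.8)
The plan is to compute $\cc_1(E^{[n]})$ on $C^{(n)}$ by pulling everything back along $\pi_n$ to $C^n$, where the intersection numbers from \eqref{eq:intnumbers1} and \eqref{eq:intnumbers2} are available, and to set up an induction on $n$ using the short exact sequence \eqref{eq:keyses}. Write $c_n := \cc_1(\pi_n^* E^{[n]}) = \pi_n^*\cc_1(E^{[n]})$. Since $\pi_n$ is finite of degree $n!$, we have $\int_{C^{(n)}} \cc_1(E^{[n]})^2 H^{n-2} = \frac{1}{n!}\int_{C^n} c_n^2\,\wH^{n-2}$, so it suffices to show $\int_{C^n} c_n^2\,\wH^{n-2} = n!\bigl(d^2-2dr(n-1)-r^2(g-1)+r^2n(n-2)\bigr)$.

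The first step is to read off $c_n$ from \eqref{eq:keyses}: additivity of $\cc_1$ in short exact sequences gives
\[
c_n = \cc_1\bigl(\pr_1^* E(-\delta'_n)\bigr) + \cc_1\bigl(\overline\pr_1^*\pi_{n-1}^* E^{[n-1]}\bigr) = \pr_1^*\cc_1(E) - r\,\delta'_n + \overline\pr_1^* c_{n-1},
\]
using that $\pr_1^* E$ has rank $r$ so twisting by $\ko(-\delta'_n)$ subtracts $r\delta'_n$, and that $\overline\pr_1^*\pi_{n-1}^* E^{[n-1]}$ has first Chern class $\overline\pr_1^* c_{n-1}$. Abbreviating $f := \pr_1^*\cc_1(E)$ (which satisfies $f^2 = 0$ since $\cc_1(E)$ is a divisor class on a curve), we then expand $c_n^2\,\wH^{n-2}$ into the six terms coming from $(f - r\delta'_n + \overline\pr_1^* c_{n-1})^2$. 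The terms $f^2\wH^{n-2}$ and $f\cdot\overline\pr_1^* c_{n-1}\cdot\wH^{n-2}$ vanish or reduce easily: $f\cdot\overline\pr_1^* c_{n-1}$ pushes forward under $\overline\pr_1$ to $\deg(\cc_1(E))\cdot c_{n-1} = d\,c_{n-1}$ on $C^{n-1}$ combined with $\overline\pr_{1*}\wH_n^{n-2} = (n-2)\wH_{n-1}^{n-3}$ — this is exactly the mechanism of \autoref{lem:projformula}, though here applied with a non-pure class, so I would either invoke the projection-formula computation directly or restate it slightly. The term $(\overline\pr_1^* c_{n-1})^2\wH_n^{n-2}$ is handled by \autoref{lem:projformula} with $\alpha = c_{n-1}^2$, giving $(n-2)\int_{C^{n-1}} c_{n-1}^2\,\wH_{n-1}^{n-3}$, which is where the inductive hypothesis enters (after dividing by the appropriate factorials). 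The remaining terms involve $\delta'_n$: namely $r^2\,\delta'^2_n\wH^{n-2}$, $-2r\,f\cdot\delta'_n\wH^{n-2}$, and $-2r\,\delta'_n\cdot\overline\pr_1^* c_{n-1}\cdot\wH^{n-2}$. The first two are given directly by \eqref{eq:intnumbers2} (note $f = \pr_1^*\cc_1(E)$ is $d$ times $\pr_1^*([x])$ modulo numerical equivalence, so $f\cdot\delta'_n\wH^{n-2} = d\cdot(n-1)!$). The last mixed term $\delta'_n\cdot\overline\pr_1^* c_{n-1}\cdot\wH^{n-2}$ requires a short separate computation: one pushes forward along $\overline\pr_1$, using that $\overline\pr_{1*}(\Delta_{1j}\cdot\wH_n^{n-2})$ is a suitable multiple of a point-class-type divisor on $C^{n-1}$ paired against $c_{n-1}$, ultimately producing a multiple of $d$.

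I expect the main obstacle to be the bookkeeping in that last mixed term $\delta'_n\cdot\overline\pr_1^* c_{n-1}\cdot\wH^{n-2}$: one must push forward $\delta'_n = \sum_{j=2}^n \Delta_{1j}$ along $\overline\pr_1$ against $\overline\pr_1^* c_{n-1}$ and $\wH_n^{n-2}$, and the fibre of $\overline\pr_1$ over a generic point meets each $\Delta_{1j}$ in the single point $x_1 = x_j$, so $\overline\pr_{1*}(\Delta_{1j}\cdot\overline\pr_1^*\beta) = \pr_{j-1}^*([\text{pt}])\cdot\beta$-type expressions appear; combined with $\wH_n^{n-2}$ via \eqref{eq:wHk} this needs care to avoid over- or under-counting the $\eta_I$. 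The cleanest route is probably to handle this term together with the $\overline\pr_{1*}\wH_n^{n-2}$ identity from \autoref{lem:projformula} rather than recomputing from scratch. Once all six contributions are assembled and divided by $n!$, one matches the claimed polynomial in $n, d, r, g$; the base case $n = 2$ (where $E^{[2]} = q_* p^* E$ on the surface $C^{(2)}$, and $\cc_1$ can be computed directly, or one checks the formula gives $d^2 - 2dr - r^2(g-1)$) anchors the induction, and the inductive step reduces the $n$-formula to the $(n-1)$-formula after routine algebraic simplification.
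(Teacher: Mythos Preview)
Your plan is workable in principle but contains a gap, and it misses a substantial shortcut. The gap is in the two cross terms $f\cdot\overline\pr_1^*c_{n-1}\cdot\wH_n^{n-2}$ and $\delta'_n\cdot\overline\pr_1^*c_{n-1}\cdot\wH_n^{n-2}$. By the projection formula (write $\wH_n=\pr_1^*[x]+\overline\pr_1^*\wH_{n-1}$ and expand $\wH_n^{n-2}$) these push forward along $\overline\pr_1$ to $d\int c_{n-1}\wH_{n-1}^{n-2}$ and $(2n-3)\int c_{n-1}\wH_{n-1}^{n-2}$ respectively, so they are controlled by $\int_{C^{n-1}} c_{n-1}\wH_{n-1}^{n-2}$, \emph{not} by your inductive hypothesis $\int c_{n-1}^2\wH_{n-1}^{n-3}$. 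In particular the last mixed term is not ``ultimately a multiple of $d$'' as you guess: it has an $r$-contribution coming from the $\delta_{n-1}$ part of $c_{n-1}$. To make your induction close you would have to simultaneously carry along a formula for $c_m\wH_m^{m-1}$ at each step.

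The paper avoids all of this. The recursion $c_n = d\,\pr_1^*[x] - r\delta'_n + \overline\pr_1^*c_{n-1}$ that you correctly extract from \eqref{eq:keyses} telescopes immediately, because $\wH_n=\pr_1^*[x]+\overline\pr_1^*\wH_{n-1}$ and $\delta_n=\delta'_n+\overline\pr_1^*\delta_{n-1}$, to the closed form
\[
 c_n\;=\;d\,\wH_n - r\,\delta_n\,.
\]
Squaring gives $c_n^2=d^2\wH^2-2dr\,\wH\delta+r^2\delta^2$, and the three intersections with $\wH^{n-2}$ are precisely the numbers already recorded in \eqref{eq:intnumbers1}; neither \eqref{eq:intnumbers2} nor \autoref{lem:projformula} is used, and there is no induction on the intersection number itself. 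Dividing by $n!=\deg\pi$ yields the lemma.
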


\begin{proof}
Using the short exact sequence \eqref{eq:keyses} one deduces by induction the formula
\[
 \cc_1(\pi^*E^{[n]})=d\wH- r\delta\,;
\]
see \cite[Sect.\ 1.7]{Krug--stab}. Hence, $\cc_1(\pi^*E^{[n]})^2=d^2\wH^2-2dr\delta\wH+r^2\delta^2$. By \eqref{eq:intnumbers1}, we get 
\[
 \cc_1(\pi^*E^{[n]})^2\wH^{n-2}=n!\bigl(d^2-2dr(n-1)-r^2(g-1)+r^2n(n-2)\bigr)\,.
\]
Since $\pi\colon C^n\to C^{(n)}$ is of degree $n!$, we have $\cc_1(E^{[n]})^2H^{n-2}=\frac{\cc_1(\pi^*E^{[n]})^2\wH^{n-2}}{n!}$. 
\end{proof}

\begin{lemma}\label{lem:ch_2}
For every $n\ge 2$, we have
\[
 \ch_2(E^{[n]})H^{n-2}=-\frac12\bigl(d+(g-1)r-(n-2)r\bigr)\,.
 \]
\end{lemma}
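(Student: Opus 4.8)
The plan is to mimic the inductive strategy already used for \autoref{lem:c_1^2}, but now tracking $\ch_2$ instead of $\cc_1^2$. The short exact sequence \eqref{eq:keyses} gives, by additivity of the Chern character,
\[
\ch(\pi_n^*E^{[n]})=\ch(\pr_1^*E(-\delta'_n))+\ch(\overline\pr_1^*\pi_{n-1}^*E^{[n-1]})\,.
\]
Extracting the degree-$2$ part and using $\ch(\pr_1^*E(-\delta'_n))=\pr_1^*\ch(E)\cdot e^{-\delta'_n}$ together with $\ch_1(\pr_1^*E)=\pr_1^*([x])\cdot d$ and $\ch_0(\pr_1^*E)=r$ (here $\pr_1^*([x])^2=0$), one gets
\[
\ch_2(\pi_n^*E^{[n]})=\Bigl(-d\,\pr_1^*([x])\,\delta'_n+\tfrac{r}{2}\,\delta_n'^2\Bigr)+\overline\pr_1^*\ch_2(\pi_{n-1}^*E^{[n-1]})\,,
\]
because $\ch_2(\pr_1^*E(-\delta'_n))=\pr_1^*\ch_2(E)-\pr_1^*\ch_1(E)\cdot\delta'_n+\tfrac r2\delta_n'^2$ and the curve has no $\CH^2$, so $\pr_1^*\ch_2(E)=0$.

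Next I would multiply this identity by $\wH_n^{n-2}$ and integrate over $C^n$. The first two terms are handled by the already-established intersection numbers: from \eqref{eq:intnumbers2}, $\pr_1^*([x])\,\delta'_n\,\wH_n^{n-2}=(n-1)!$ and $\delta_n'^2\wH_n^{n-2}=-(n-1)!\,2(g-1)+(n-1)!(n-2)3$. The third term is dealt with by \autoref{lem:projformula} with $\alpha=\ch_2(\pi_{n-1}^*E^{[n-1]})$, which converts $\int_{C^n}\wH_n^{n-2}\overline\pr_1^*\alpha$ into $(n-2)\int_{C^{n-1}}\wH_{n-1}^{n-3}\alpha$. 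Putting these together, and writing $a_n:=\int_{C^n}\ch_2(\pi_n^*E^{[n]})\,\wH_n^{n-2}$, one obtains a recursion of the shape
\[
a_n=(n-2)a_{n-1}-d(n-1)!+\tfrac r2\bigl(-(n-1)!\,2(g-1)+3(n-2)(n-1)!\bigr)\,.
\]
Since $\ch_2(E^{[n]})H^{n-2}=a_n/n!$ (as $\pi_n$ has degree $n!$), I would divide by $n!$ to get a clean recursion for $b_n:=\ch_2(E^{[n]})H^{n-2}$, namely $n\,b_n=(n-2)\,b_{n-1}+\text{(explicit linear-in-$d$ term)}$.

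The base case $n=2$ I would either compute directly on the surface $C^{(2)}$ (where $\pi_2^*E^{[2]}$ sits in $0\to\pr_1^*E(-\Delta_{12})\to\pi_2^*E^{[2]}\to\pr_2^*E\to0$, giving $\ch_2$ from the two rank-$r$, degree-$d$ pieces), or simply read it off by setting $n=2$ in the $n=1$ degenerate case where $E^{[1]}=E$ on $C^{(1)}=C$ and $H^{-1}$ is interpreted formally — cleaner to just verify $b_2=-\tfrac12(d+(g-1)r)$ by hand. Then I would check that the claimed closed form $b_n=-\tfrac12\bigl(d+(g-1)r-(n-2)r\bigr)$ satisfies the recursion: substitute into $n\,b_n-(n-2)\,b_{n-1}$ and confirm it equals the explicit source term. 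This last verification is the only place a sign or binomial-coefficient slip could creep in, so that bookkeeping — especially getting the factor $3$ in $\delta_n'^2\wH^{n-2}$ and the $(n-2)$ from \autoref{lem:projformula} to combine correctly — is the main (though routine) obstacle; there is no conceptual difficulty beyond careful arithmetic.
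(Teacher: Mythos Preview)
Your proposal is correct and follows essentially the same route as the paper: extract $\ch_2$ from the short exact sequence \eqref{eq:keyses}, evaluate the $\pr_1^*E(-\delta'_n)$ contribution via \eqref{eq:intnumbers2}, pass the $\overline\pr_1^*$ term through \autoref{lem:projformula}, and induct. The only cosmetic difference is that you phrase the induction as a recursion $a_n=(n-2)a_{n-1}+\dots$ and then verify the closed form, whereas the paper plugs the inductive hypothesis directly; your aside about ``$H^{-1}$ interpreted formally'' is best dropped in favour of the direct $n=2$ check you already indicate.
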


\begin{proof}
We have $\ch(\pr_1^*E)=r+d\pr_1^*([x])$ and $\ch(\reg(-\delta'))=1-\delta'+\frac12 \delta'^2 \pm \dots$. Hence,
$\ch_2(\pr_1^*E(-\delta'))=-d\pr_1^*([x])\delta'+\frac12 r \delta'^2$ which by \eqref{eq:intnumbers2} gives 
\begin{equation}\label{eq:ch2pr1}
\ch_2(\pr_1^*E(-\delta'))\wH^{n-2}= -(n-1)!d-(n-1)!(g-1)r+(n-1)!(n-2)\frac32r 
\end{equation}
We now can prove the assertion, or rather the equivalent formula
 \begin{equation}\label{eq:ch2pi}
 \ch_2(\pi_n^*E^{[n]})\wH^{n-2}=-\frac12n!\bigl(d+(g-1)r-(n-2)r\bigr)\,,
 \end{equation}
by induction using the short exact sequence \eqref{eq:keyses}. 
For $n=2$, the sequence gives $\ch_2(\pi_2^*E^{[2]})=\ch_2(\pr_1^*E(-\delta'))$, and we check that the right-hand sides of
\eqref{eq:ch2pr1} and \eqref{eq:ch2pi} are equal for $n=2$. For $n\ge 3$, the sequence \eqref{eq:keyses} together with \autoref{lem:projformula} give
\begin{align*}
\ch_2(\pi_n^*E^{[n]})\wH_n^{n-2}= \ch_2(\pr_1^*E(-\delta'))\wH_n^{n-2}+\overline \pr_1^*\ch_2(\pi_{n-1}^*E^{[n-1]})\wH_n^{n-2}\\
= \ch_2(\pr_1^*E(-\delta'))\wH_n^{n-2}+(n-2)\ch_2(\pi_{n-1}^*E^{[n-1]})\wH_{n-1}^{n-3}\,.
\end{align*}
Now, the induction step is done by plugging \eqref{eq:ch2pr1} and the $n-1$ case of \eqref{eq:ch2pi} into the right-hand side.
\end{proof}
Recall that the discriminant is defined as
\[
 \Delta(E^{[n]})=-2\rank(E^{[n]})\ch_2(E^{[n]})+\cc_1(E^{[n]})^2=-2nr\ch_2(E^{[n]})+\cc_1(E^{[n]})^2\,.
\]
Hence \autoref{lem:c_1^2} and \autoref{lem:ch_2} together compute $\Delta(E^{[n]})H^{n-2}$ and prove \autoref{thm:main}.

\bibliographystyle{alpha}
\addcontentsline{toc}{chapter}{References}
\bibliography{references}

\begin{thebibliography}{EMLN11}

\bibitem[AO94]{AnconaOttaviani--stab}
Vincenzo Ancona and Giorgio Ottaviani.
\newblock Stability of special instanton bundles on {${\bf P}^{2n+1}$}.
\newblock {\em Trans. Amer. Math. Soc.}, 341(2):677--693, 1994.

\bibitem[BD18]{BasuDan-stab}
Suratno Basu and Krishanu Dan.
\newblock Stability of secant bundles on the second symmetric power of curves.
\newblock {\em Arch. Math. (Basel)}, 110(3):245--249, 2018.

\bibitem[BN13]{BiswasNagaraj-stab}
Indranil Biswas and D.~S. Nagaraj.
\newblock Stability of secant bundles on second symmetric power of a curve.
\newblock In {\em Commutative algebra and algebraic geometry ({CAAG}-2010)},
  volume~17 of {\em Ramanujan Math. Soc. Lect. Notes Ser.}, pages 13--18.
  Ramanujan Math. Soc., Mysore, 2013.

\bibitem[Bog78]{Bogomolov--ineq}
Fedor~A. Bogomolov.
\newblock Holomorphic tensors and vector bundles on projective manifolds.
\newblock {\em Izv. Akad. Nauk SSSR Ser. Mat.}, 42(6):1227--1287, 1439, 1978.

\bibitem[BS92]{BohnhorstSpindler--stab}
Guntram Bohnhorst and Heinz Spindler.
\newblock The stability of certain vector bundles on {${\bf P}^n$}.
\newblock In {\em Complex algebraic varieties ({B}ayreuth, 1990)}, volume 1507
  of {\em Lecture Notes in Math.}, pages 39--50. Springer, Berlin, 1992.

\bibitem[DP16]{DanPal-stab}
Krishanu Dan and Sarbeswar Pal.
\newblock Semistability of certain bundles on second symmetric power of a
  curve.
\newblock {\em J. Geom. Phys.}, 103:37--42, 2016.

\bibitem[EMLN11]{EMLN-stab}
A.~El~Mazouni, F.~Laytimi, and D.~S. Nagaraj.
\newblock Secant bundles on second symmetric power of a curve.
\newblock {\em J. Ramanujan Math. Soc.}, 26(2):181--194, 2011.

\bibitem[HL10]{HL}
Daniel Huybrechts and Manfred Lehn.
\newblock {\em The geometry of moduli spaces of sheaves}.
\newblock Cambridge Mathematical Library. Cambridge University Press,
  Cambridge, second edition, 2010.

\bibitem[Kru18]{Krug--stab}
Andreas Krug.
\newblock Stability of tautological bundles on symmetric products of curves.
\newblock {\em arXiv:1809.06450}, 2018.

\bibitem[Mat65]{Mattuck--sym}
Arthur Mattuck.
\newblock Secant bundles on symmetric products.
\newblock {\em Amer. J. Math.}, 87:779--797, 1965.

\bibitem[Mis19]{Mistretta--stabtaut}
Ernesto~C. Mistretta.
\newblock On stability of tautological bundles and their total transforms.
\newblock {\em Milan J. Math.}, 87(2):273--282, 2019.

\bibitem[Miy87]{Miyaoka--stabineq}
Yoichi Miyaoka.
\newblock The {C}hern classes and {K}odaira dimension of a minimal variety.
\newblock In {\em Algebraic geometry, {S}endai, 1985}, volume~10 of {\em Adv.
  Stud. Pure Math.}, pages 449--476. North-Holland, Amsterdam, 1987.

\bibitem[O'G96]{OG--basic}
Kieran~G. O'Grady.
\newblock Moduli of vector bundles on projective surfaces: some basic results.
\newblock {\em Invent. Math.}, 123(1):141--207, 1996.

\end{thebibliography}

\end{document}